\documentclass[12pt]{amsart}
\usepackage{euler, amsfonts, amssymb, latexsym, epsfig,epic}

\usepackage{amscd,amssymb}
\usepackage{verbatim}
\usepackage{pstricks}
\usepackage{pst-node}
\usepackage{fullpage}



\input{diagrams}

\usepackage{bibentry}
\usepackage{amsmath}
\usepackage{amsthm}
\usepackage{amssymb}
\usepackage{amsfonts}
\usepackage{amsxtra}
\usepackage{amscd}
\usepackage{epsfig}
\usepackage{verbatim}

\usepackage{latexsym,amstext,epsfig}
\usepackage[all, knot]{xy}
\xyoption{arc}


\newsymbol\pp 1275

\newcommand{\Hom}{\operatorname{Hom}}
\newcommand{\End}{\operatorname{End}}
\newcommand{\Ext}{\operatorname{Ext}}

\newcommand{\rep}{\operatorname{rep}}

\newcommand{\GL}{\operatorname{GL}}

\newcommand{\ZZ}{\mathbb Z}

\newcommand{\RR}{\mathbb R}

\newcommand{\Mat}{\operatorname{Mat}}

\newcommand{\rel}{\operatorname{relint}}

\newcommand{\filt}{\operatorname{filt}}

\newcommand{\ddim}{\operatorname{\mathbf{dim}}}
\newcommand{\dd}{\operatorname{\mathbf{d}}}
\newcommand{\ee}{\operatorname{\mathbf{e}}}
\newcommand{\hh}{\operatorname{\mathbf{h}}}

\newcommand{\Eff}{\operatorname{Eff}}
\newcommand{\module}{\operatorname{mod}}

\newtheorem{theorem}{Theorem}[section]
\newtheorem{proposition}[theorem]{Proposition}
\newtheorem{corollary}[theorem]{Corollary}
\newtheorem{lemma}[theorem]{Lemma}

\theoremstyle{definition}

\newtheorem{remark}[theorem]{Remark}

\newcount\cols
{\catcode`,=\active\catcode`|=\active
\gdef\Young(#1){\hbox{$\vcenter
{\mathcode`,="8000\mathcode`|="8000
\def,{\global\advance\cols by 1 &}%
\def|{\cr
      \multispan{\the\cols}\hrulefill\cr
       &\global\cols=2 }%
  \offinterlineskip\everycr{}\tabskip=0pt
  \dimen0=\ht\strutbox \advance\dimen0 by \dp\strutbox
    \halign
    {\vrule height \ht\strutbox depth \dp\strutbox##
      &&\hbox to \dimen0{\hss$##$\hss}\vrule\cr
     \noalign{\hrule}&\global\cols=2 #1\crcr
     \multispan{\the\cols}\hrulefill\cr%
   }
}$}} }

\begin{document}

\pagestyle{plain}

\mbox{}
\title{On the geometry of orbit closures for representation-infinite algebras}
\author{Calin Chindris}

\address{University of Missouri, Mathematics Department, Columbia, MO 65211, USA}
\email[Calin Chindris]{chindrisc@missouri.edu}

\date{August 5, 2011; Revised: \today}

\bibliographystyle{plain}
\subjclass[2000]{Primary 16G30; Secondary 16G20, 16R30}
\keywords{Cones of effective weights, exceptional sequences, orbit closures, tame concealed algebras}
\maketitle

\begin{abstract} For the Kronecker algebra, Zwara found in \cite{Zwa3} an example of a module whose orbit closure is neither unibranch nor Cohen-Macaulay. In this paper, we explain how to extend this example to all representation-infinite algebras with a preprojective component. 
\end{abstract}

\section{Introduction}\label{intro-sec}

Throughout this paper, we work over an algebraically closed field $k$ of characteristic zero. All algebras (associative and with identity) are assumed to be finite-dimensional over $k$, and all modules are assumed to be finite-dimensional left modules.

One important problem in the geometric representation theory of algebras is that of describing the orbit closures of modules in module varieties. In \cite[Remark 5.1]{Zwa4}, Zwara asked wether the orbit closure of an arbitrary module over a tame concealed algebra is a unibranch variety. On the other hand, Zwara constructed in \cite[Theorem 1]{Zwa3} an orbit closure of a module over the Kronecker algebra with bad singularities. 

Our goal in this short paper is to explain how orthogonal exceptional sequences can be used to extend the aforementioned example to all connecetd representation-infinite algebras with a preprojective component (in particular, to all tame concealed algebras). We prove that:

\begin{theorem}\label{main:thm} Let $A=kQ/I$ be  a connected representation-infinite algebra with a preprojective component. Then, there exists a dimension vector $\dd \in \ZZ^{Q_0}_{\geq 0}$ and a module $M \in \module(A,\dd)$ such that the orbit closure $\overline{\GL(\dd)M}$ is neither unibranch  nor Cohen-Macaulay. \end{theorem}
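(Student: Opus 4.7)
The plan is to lift Zwara's example from the Kronecker algebra into $\module(A,\dd)$ through a Kronecker-type orthogonal exceptional pair of preprojective $A$-modules. The preprojective-component hypothesis is precisely what produces enough rigidity and orthogonality to carry out such a transfer, while the representation-infinite hypothesis is what forces the $\Ext^1$ between the two pieces of the pair to be at least two-dimensional, so that the ``Kronecker scenario'' genuinely occurs.

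The first step is to produce, inside the preprojective component of $A$, an orthogonal exceptional pair of bricks $(E_1, E_2)$ satisfying
\[
\Hom_A(E_1, E_2) = \Hom_A(E_2, E_1) = 0, \qquad \Ext^1_A(E_2, E_1) = 0, \qquad \dim_k \Ext^1_A(E_1, E_2) \geq 2,
\]
together with $\End_A(E_i) = k$ and $\Ext^1_A(E_i, E_i) = 0$ for $i = 1, 2$. Since $A$ is connected, representation-infinite, and has a preprojective component, the translation quiver of that component has shape $\ZZ Q'$ for some connected non-Dynkin quiver $Q'$. A complete slice inside the component yields a full orthogonal exceptional sequence of preprojective bricks, and the non-Dynkin condition forces some adjacent pair (after suitable braid-group mutation on exceptional sequences) to carry a two-dimensional $\Ext^1$; one then normalizes so that the $\Ext^1$ in the other direction vanishes.

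The second step is the transfer. Let $(n_1, n_2)$ and $N$ be the Kronecker dimension vector and the module furnished by \cite[Theorem 1]{Zwa3}, and set $\dd := n_1\, \ddim E_1 + n_2\, \ddim E_2$. Assemble $M \in \module(A,\dd)$ out of $n_1$ copies of $E_1$, $n_2$ copies of $E_2$, and the extension data dictated by $N$ via a fixed basis of $\Ext^1_A(E_1, E_2)$. The orthogonality and exceptionality of $(E_1,E_2)$ make the functor from Kronecker representations to $\module(A)$ that sends $V$ to its assembled module $M_V$ exact and fully faithful. Through the hom-controlled functor formalism of Bongartz, together with the filtration-variety reduction used by Zwara, this produces a $\GL(\dd)$-equivariant smooth morphism with geometrically connected fibers between a neighborhood of $M$ in $\overline{\GL(\dd) M}$ and a neighborhood of $N$ in its Kronecker orbit closure. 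Because being unibranch and being Cohen--Macaulay are both detected and preserved under such morphisms, the bad singularity at $N$ migrates to $M$, yielding the claim.

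The main obstacle will be the smoothness and connectedness of the comparison morphism in step two: one must carefully set up the filtration variety parametrizing $A$-modules in $\module(A,\dd)$ presented as extensions of $E_2^{n_2}$ by $E_1^{n_1}$, verify that it is smooth, and control the natural $\GL(\dd)$-equivariant map from this filtration variety onto $\overline{\GL(\dd)M}$. This is where the vanishing of $\Ext^1_A(E_2, E_1)$ together with the vanishing of the two $\Hom$ spaces between $E_1$ and $E_2$ enter decisively, ensuring that no extraneous singular behavior is introduced by the embedding of Zwara's example into $\module(A,\dd)$.
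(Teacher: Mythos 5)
Your second step (transferring Zwara's Kronecker example through the exact, fully faithful functor $\module(kK_2)\to\filt_{\mathcal E}\subseteq\module(A)$ and Zwara's smooth-morphism machinery) is essentially the paper's Proposition~\ref{exceptional-Zwara-prop}, and is fine. The real gap is in your first step, and it is not cosmetic. You try to find both $E_1$ and $E_2$ inside the preprojective component of $A$. In the paper's construction the two modules come from opposite ends of the Auslander--Reiten quiver: $E_1$ is preprojective and $E_2$ is preinjective, obtained as a submodule and a quotient of a $\theta_{\hh}$-stable module $M$ of dimension vector $\hh$ (the isotropic Schur root), with the Hom- and Ext-vanishing coming from the trisection $\mathcal P\vee\mathcal R\vee\mathcal Q$. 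Two distinct bricks in a single preprojective component almost always have $\Hom\neq 0$ in one direction (already in the Kronecker case every pair of distinct preprojective indecomposables has nonzero Hom between them), so there is no reason a complete slice plus braid-group mutation should produce a pair with $\Hom=0$ both ways, $\Ext^1$ vanishing on one side, and $\dim_k\Ext^1$ equal to $2$ on the other. Your assertion that the non-Dynkin condition ``forces'' such a pair is exactly the point that needs a proof, and you offer none; the paper supplies it via the cone of effective weights $\Eff(A,\hh)$, a facet $\mathcal F$, and the $\theta_0$-stable decomposition $\hh=\hh_1\pp\hh_2$, which yields $n_1=n_2=1$, $\dim_k\Ext^1_A(E_2,E_1)=2$ and $\Ext^2_A(E_2,E_1)=0$ by a short computation with the radical of $\chi_A$.

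A second omission: you never reduce the general case to a tractable one. The paper uses Happel--Vossieck to pass to a tame concealed factor algebra $B$ of $A$; since $\module(B,\dd)$ is a closed $\GL(\dd)$-stable subvariety of $\module(A,\dd)$ and orbit closures of $B$-modules agree in the two varieties, it suffices to treat tame concealed $A$, where the isotropic root $\hh$, the trisection, and the effective-weight cone are available. Without this reduction your appeal to ``the translation quiver of the preprojective component has shape $\ZZ Q'$'' is not justified for an arbitrary algebra with a preprojective component, and even where it holds you would still face the Hom-vanishing obstruction above. Finally, note that $\dim_k\Ext^1\geq 2$ is not enough for the transfer as you state it: to get $A_{\mathcal E}\cong kK_2$ you need the dimension to be exactly $2$ and $\Ext^{\geq 2}$ to vanish, so that the ideal $I_{\mathcal E}$ is zero; the paper verifies both.
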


In \cite[Corollary 1.3]{Zwa4}, Zwara showed that the orbit closures of modules for representation-finite algebras are always unibranch varieties. This result combined with Theorem \ref{main:thm} shows that an algebra with a preprojective component is representation-finite if and only if all of its orbit closures are unibranch. 

The layout of this paper is as follows. In Section \ref{module-varieties-sec}, we review background material on module varieties; in particular, we recall the notions of orthogonal exceptional sequences, and effective weights for finite-dimensional algebras. We prove Theorem \ref{main:thm} in Section \ref{proof-of-theorem-sec}.

\section{Background on module varieties} \label{module-varieties-sec} Let $Q=(Q_0,Q_1,t,h)$ be a finite quiver with vertex set $Q_0$ and arrow set $Q_1$. The two functions $t,h:Q_1 \to Q_0$ assign to each arrow $a \in Q_1$ its tail \emph{ta} and head \emph{ha}, respectively.

A representation $M$ of $Q$ over $k$ is a collection $(M(i),M(a))_{i\in Q_0, a\in Q_1}$ of finite-dimensional $k$-vector spaces $M(i)$, $i \in Q_0$, and $k$-linear maps $M(a) \in \Hom_k(M(ta), M(ha))$, $a \in Q_1$. The dimension vector of a representation $M$ of $Q$ is the function $\ddim M : Q_0 \to \ZZ$ defined by $(\ddim M)(i)=\dim_{k} M(i)$ for $i\in Q_0$. Let $S_i$ be the one-dimensional representation of $Q$ at vertex $i \in Q_0$ and let us denote by $\ee_i$ its dimension vector. By a dimension vector of $Q$, we simply mean a function $\dd \in \ZZ_{\geq 0}^{Q_0}$.

Given two representations $M$ and $N$ of $Q$, we define a morphism $\varphi: M \rightarrow N$ to be a collection $(\varphi(i))_{i \in Q_0}$ of $k$-linear maps with $\varphi(i) \in \Hom_k(M(i), N(i))$ for each $i \in Q_0$, and such that $\varphi(ha)M(a)=N(a)\varphi(ta)$ for each $a \in Q_1$. We denote by $\Hom_Q(M, N)$ the $k$-vector space of all morphisms from $M$ to $N$. Let $M$ and $N$ be two representations of $Q$. We say that $M$ is a subrepresentation of $N$ if $M(i)$ is a subspace of $N(i)$ for each $i \in Q_0$ and $M(a)$ is the restriction of $N(a)$ to $M(ta)$ for each $a \in Q_1$. In this way, we obtain the abelian category $\rep(Q)$ of all representations of $Q$.

Given a quiver $Q$, its path algebra $kQ$ has a $k$-basis consisting of all paths (including the trivial ones) and the multiplication in $kQ$ is given by concatenation of paths. It is easy to see that any $kQ$-module defines a representation of $Q$, and vice-versa. Furthermore, the category $\module(kQ)$ of $kQ$-modules is equivalent to the category $\rep(Q)$. In what follows, we identify $\module(kQ)$ and $\rep(Q)$, and use the same notation for a module and the corresponding representation.

A two-sided ideal $I$ of $kQ$ is said to be \emph{admissible} if there exists an integer $L\geq 2$ such that $R_Q^L\subseteq I \subseteq R_Q^2$. Here, $R_Q$ denotes the two-sided ideal of $kQ$ generated by all arrows of $Q$. 

If $I$ is an admissible ideal of $KQ$, the pair $(Q,I)$ is called a \emph{bound quiver} and the quotient algebra $kQ/I$ is called the \emph{bound quiver algebra} of $(Q,I)$.  It is well-known that any basic algebra $A$ is isomorphic to the bound quiver algebra of a bound quiver $(Q_{A},I)$, where $Q_{A}$ is the Gabriel quiver of $A$ (see \cite{AS-SI-SK}). (Note that the ideal of relations $I$ is not uniquely determined by $A$.) We say that $A$ is a \emph{triangular} algebra if its Gabriel quiver has no oriented cycles.

Fix a bound quiver $(Q,I)$ and let $A=kQ/I$ be its bound quiver algebra.  We denote by $e_i$ the primitive idempotent corresponding to the vertex $i\in Q_0$.  A representation $M$ of a $A$ (or  $(Q,I)$) is just a representation $M$ of $Q$ such that $M(r)=0$ for all $r \in I$. The category $\module(A)$ of finite-dimensional left $A$-modules is equivalent to the category $\rep(A)$ of representations of $A$. As before, we identify $\module(A)$ and $\rep(A)$, and make no distinction between $A$-modules and representations of $A$. 

Assume form now on that $A$ has finite global dimension; this happens, for example, when $Q$ has no oriented cycles. The Ringel form of $A$ is the bilinear form  $\langle \cdot, \cdot \rangle_{A} : \ZZ^{Q_0}\times \ZZ^{Q_0} \to \ZZ$ defined by
$$
\langle \dd,\ee \rangle_{A}=\sum_{l\geq 0}(-1)^l \sum_{i,j\in Q_0}\dim_k \Ext^l_{A}(S_i,S_j)\dd(i)\ee(j).
$$
Note that if $M$ is a $\dd$-dimensional $A$-module and $N$ is an $\ee$-dimensional $A$-module then
$$
\langle \dd,\ee \rangle_{A}=\sum_{l\geq 0}(-1)^l \dim_k \Ext^l_{A}(M,N).
$$
The quadratic form induced by $\langle \cdot,\cdot \rangle_{A}$ is denoted by $\chi_{A}$.

The \emph{Tits form} of $A$ is the integral quadratic form $q_{A}: \ZZ^{Q_0} \to \ZZ$ defined by
$$q_{A}(\dd):=\sum_{i \in Q_0}\dd^2(i)-\sum_{i,j\in Q_0}\dim_{k}\Ext^1_{A}(S_i,S_j)\dd(i)\dd(j)+\sum_{i,j\in Q_0}\dim_{k}\Ext^2_{A}(S_i,S_j)\dd(i)\dd(j).$$

If $A$ is triangular then $r(i,j):=|R \cap e_j\langle R \rangle e_i|$ is precisely $\dim_{k}\Ext^2_{A}(S_i,S_j)$, $\forall i,j \in Q_0$, as shown by Bongartz in \cite{Bon}. So, in the triangular case, we can write
$$
q_{A}(\dd)=\sum_{i \in Q_0}\dd^2(i)-\sum_{a\in Q_1}\dd(ta)\dd(ha)+\sum_{i,j\in Q_0}r(i,j)\dd(i)\dd(j).
$$

A dimension vector $\dd$ of $A$ is called a \emph{root} if $\dd$ is the dimension vector of an indecomposable $A$-module. A root $\dd$ of $A$ is said to be \emph{isotropic} if $q_{A}(\dd)=0$; we say it is \emph{real} if $q_{A}(\dd)=1$. Finally, we say that $\dd$ is a \emph{Schur root} if $\dd$ is the dimension vector of an $A$-module $M$ for which $\End_{A}(M) \simeq k$. Such a module $M$ is called a \emph{Schur module}.

Let $\dd$ be a dimension vector of $A$ (or equivalently, of $Q$). The variety of $\dd$-dimensional $A$-modules is the affine variety
$$
\module(A,\dd)=\{M \in \prod_{a \in Q_1} \Mat_{\dd(ha)\times \dd(ta)}(k) \mid M(r)=0, \forall r \in
I \}.
$$
It is clear that $\module(A,\dd)$ is a $\GL(\dd)$-invariant closed subset of the affine space $\module(Q,\dd):= \prod_{a \in Q_1} \Mat_{\dd(ha)\times \dd(ta)}(k)$. Note that $\module(A, \dd)$ does not have to be irreducible. We call $\module(A,\dd)$ the \emph{module variety} of $\dd$-dimensional $A$-modules. 

\subsection{Orthogonal exceptional sequences}\label{ortho-except-seq-sec} Recall that a sequence $\mathcal{E}=(E_1, \dots, E_t)$ of finite-dimensional $A$-modules is called an \emph{orthogonal exceptional sequence} if the following conditions are satisfied:
\begin{enumerate}
\renewcommand{\theenumi}{\arabic{enumi}}
\item $E_i$ is an exceptional module, i.e, $\End_A(E_i)=k$ and $\Ext^l_A(E_i,E_i)=0$ for all $l \geq 1$ and $1 \leq i \leq t$;

\item $\Ext_A^l(E_i,E_j)=0$ for all $l \geq 0$  and $1 \leq i<j \leq t$;

\item $\Hom_A(E_j,E_i)=0$ for all $1 \leq i<j \leq t$.
\end{enumerate}
(If we drop condition $(3)$, we simply call $\mathcal{E}$ an \emph{exceptional sequence}.)

Given an orthogonal exceptional sequence $\mathcal{E}$, consider the full subcategory $\filt_{\mathcal{E}}$ of $\module(A)$ whose objects $M$ have a finite filtration $0=M_0\subseteq M_1 \subseteq \dots \subseteq M_s=M$ of submodules such that each factor $M_j/M_{j-1}$ is isomorphic to one the $E_1, \ldots, E_t$. It is clear that $\filt_{\mathcal{E}}$ is a full exact subcategory of $\module(A)$ which is closed under extensions. Moreover, Ringel \cite{R2} (see also \cite{DW2}) showed that $\filt_{\mathcal{E}}$ is an abelian subcategory whose simple objects are precisely $E_1, \ldots, E_t$.

Let $A_{\mathcal{E}}=kQ_{\mathcal{E}}/I_{\mathcal{E}}$ be the bound quiver algebra where the Gabriel quiver $Q_{\mathcal{E}}$ has vertex set $\{1, \ldots, t\}$ and $\dim_k \Ext_A^1(E_i,E_j)$ arrows from $i$ to $j$ for all $1\leq i, j \leq t$. The ideal $I_{\mathcal{E}}$ is determined by the $A_{\infty}$-algebra structure of the the Yoneda algebra $\Ext^{\bullet}_Q(\bigoplus_{i=1}^t E_i, \bigoplus_{i=1}^t E_i)$. From the work of Keller \cite{Kel1, Kel2}, we know that there exists an equivalence of categories $F_{\mathcal{E}}: \module(A_{\mathcal{E}}) \to \filt_{\mathcal{E}}$ sending the simple $A_{\mathcal{E}}$-module $S_i$ at vertex $i$ to $E_i$ for all $1 \leq i \leq t$. 

Now, consider a dimension vector $\dd'$ of $Q_{\mathcal{E}}$ and set $\dd=\sum_{1 \leq i \leq t}\dd'(i)\ddim E_i$. Then, there exist a regular morphism $f_{\mathcal{E}}:\module(A_{\mathcal{E}},\dd') \to \module(A, \dd)$ such that $f_{\mathcal{E}}(M') \simeq F_{\mathcal{E}}(M')$ for all $M' \in \module(A_{\mathcal{E}},\dd')$ (for more details, see \cite[Section 5]{CC9}).

As an immediate consequence of Zwara's Theorem 1.2 in \cite{Zwa5}, we have:

\begin{proposition}\label{exceptional-Zwara-prop} Keep the same notations as above and let $M' \in \module(A_{\mathcal{E}},\dd')$. Then, $\overline{\GL(\dd')M'}$ is smooth/unibranch/Cohen-Macaulay at some $N'$ if and only if the same is true for $\overline{\GL(\dd)f_{\mathcal{E}}(M')}$ at $f_{\mathcal{E}}(N')$.
\end{proposition}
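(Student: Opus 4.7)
My approach is to apply Zwara's Theorem 1.2 in \cite{Zwa5} directly to the exact functor $F_{\mathcal{E}}: \module(A_{\mathcal{E}}) \to \filt_{\mathcal{E}} \subseteq \module(A)$, whose effect on objects is realized geometrically by $f_{\mathcal{E}}$. Since the proposition is billed as an immediate consequence of that theorem, essentially all the work lies in matching the data here to the hypotheses of \cite[Theorem 1.2]{Zwa5}.

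First I would verify that $F_{\mathcal{E}}$ is exact and hom-controlled. Exactness is automatic because $\filt_{\mathcal{E}}$ is an exact abelian subcategory of $\module(A)$ closed under extensions, and $F_{\mathcal{E}}$ is an equivalence of categories onto this subcategory. Hom-control is also automatic: for any $X,Y \in \module(A_{\mathcal{E}})$, the equivalence yields a $k$-linear isomorphism
\[
\Hom_{A_{\mathcal{E}}}(X,Y) \xrightarrow{\sim} \Hom_A(F_{\mathcal{E}}(X), F_{\mathcal{E}}(Y)),
\]
so in particular $\dim_k\Hom_{A_{\mathcal{E}}}(X,Y) = \dim_k\Hom_A(F_{\mathcal{E}}(X), F_{\mathcal{E}}(Y))$.

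Second, I would invoke the construction of $f_{\mathcal{E}}$ recalled from \cite[Section 5]{CC9}. There, $f_{\mathcal{E}}$ is $\GL(\dd')$-equivariant with respect to a suitable group homomorphism $\GL(\dd') \to \GL(\dd)$, and it realizes the categorical equivalence on points, i.e. $f_{\mathcal{E}}(M') \simeq F_{\mathcal{E}}(M')$ for every $M'$. Combined with the fact that $F_{\mathcal{E}}$ is fully faithful, this forces $M'$ and $N'$ to lie in the same $\GL(\dd')$-orbit if and only if $f_{\mathcal{E}}(M')$ and $f_{\mathcal{E}}(N')$ lie in the same $\GL(\dd)$-orbit, and more globally $f_{\mathcal{E}}^{-1}\bigl(\overline{\GL(\dd)f_{\mathcal{E}}(M')}\bigr) = \overline{\GL(\dd')M'}$.

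With these preparations, Zwara's theorem applies verbatim and gives the stated equivalence of local properties (smooth, unibranch, Cohen-Macaulay) at $N'$ versus $f_{\mathcal{E}}(N')$. The main obstacle, as I see it, is purely bookkeeping: one must check that the morphism $f_{\mathcal{E}}$ produced in \cite[Section 5]{CC9} satisfies the technical smoothness / principal-bundle condition on fibres that \cite[Theorem 1.2]{Zwa5} requires in order to transfer unibranch and Cohen-Macaulay properties between source and target. Once this compatibility is in place, no further argument is needed and the proposition follows immediately.
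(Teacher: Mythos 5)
Your approach matches the paper's: Proposition \ref{exceptional-Zwara-prop} is presented there precisely as an immediate consequence of Zwara's Theorem 1.2 in \cite{Zwa5}, and you have correctly identified and verified the two hypotheses that theorem actually needs --- exactness of $F_{\mathcal{E}}$ (since $\filt_{\mathcal{E}}$ is an abelian subcategory of $\module(A)$ closed under extensions) and hom-control (with bilinear correction $\xi = 0$, by full faithfulness). The one thing to streamline is your closing paragraph: the ``principal-bundle condition on fibres'' you flag is not a hypothesis of Zwara's theorem but part of what his proof constructs, so once exactness and hom-control are in place nothing further about $f_{\mathcal{E}}$ needs to be checked.
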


\begin{remark} In particular, this proposition allows us to construct orbit closures in $\module(A,\dd)$ with bad singularities by reducing the considerations to the smaller algebra $A_{\mathcal{E}}$. What is needed at this point is an effective method for constructing convenient orthogonal exceptional sequences. This is addressed in the section below. 
\end{remark}

\subsection{Cones of effective weights}\label{Eff-sec} Let $\dd$ be a dimension vector of $A$ and let $\theta \in \RR^{Q_0}$ be a real weight. Given a vector $\dd' \in \RR^{Q_0}$, we define $\theta(\dd')=\sum_{i \in Q_0}\theta(i)\dd'(i)$. Recall that a module $M \in \module(A)$ is said to be \emph{$\theta$-semi-stable} if $\theta(\ddim M)=0$ and $\theta(\ddim M') \leq 0$ for all submodules $M' \subseteq M$. We say that $M$ is \emph{$\theta$-stable} if  $\theta(\ddim M)=0$ and $\theta(\ddim M') < 0$ for all proper submodules $\{0\} \subset M' \subset M$. Denote by $\module(A)^{ss}_{\theta}$ the full subcategory of $\module(A)$ consisting of all $\theta$-semi-stable $A$-modules. Then, $\module(A)^{ss}_{\theta}$ is an abelian subcategory of $\module(A)$ which is closed under extensions, and whose simple objects are precisely the $\theta$-stable $A$-modules. Moreover, $\module(A)^{ss}_{\theta}$ is Artinian and Noetherian, and hence, every $\theta$-semi-stable finite-dimensional $A$-module has a Jordan-H{\"o}lder filtration in $\module(A)^{ss}_{\theta}$.

Now, let $C$ be an irreducible component of $\module(A,\dd)$. We define $C^{s(s)}_{\theta}=\{M \in C \mid M \text{~is~} \theta\text{-(semi-)stable} \}$. The cone of effective weights of $C$ is, by definition, the set
$$
\Eff(C)=\{\theta \in \RR^{Q_0}\mid C^{ss}_{\theta}\neq \emptyset \}.
$$

It is well known that $\Eff(C)$ is a rational convex polyhedral cone of dimension at most $|Q_0|-1$. Given a lattice point $\theta_0$ in $\Eff(C)$, we say that $$\dd=\dd_1\pp \ldots \pp \dd_t$$ is the \emph{$\theta_0$-stable decomposition of $\dd$ in $C$} if the generic module $M$ in $C$ has a Jordan-H{\"o}lder filtration $\{0\}=M_0 \subset M_1 \subset \ldots \subset M_t=M$ in $\module(A)^{ss}_{\theta_0}$ such that the sequence $(\ddim M_1,\\ \ddim M_1/M_2, \ldots, \ddim M/M_{t-1})$ is the same as $(\dd_1,\ldots, \dd_t)$ up to permutation (for more details, see \cite[Section 6.2]{CC9}). If $\dd'$ is a dimension vector that occurs in a stable decomposition with multiplicity $m$, we write $m\cdot\dd'$ instead of $\underbrace{\dd' \pp \dd \pp \ldots \pp \dd'}_{m}$.

In what follows, we denote by $\mathbf H(\dd)$ the hyperplane in $\RR^{Q_0}$ orthogonal to a real-valued function $\dd \in \RR^{Q_0}$, i.e., $\mathbf H(\dd)=\{\theta \in \RR^{Q_0} \mid \theta(\dd)=0\}$.

\begin{lemma}\cite[Lemma 6.5]{CC9}\label{face-Eff-lemma} Let $\mathcal F$ be a face of $\Eff(C)$ of positive dimension, $\theta_0 \in \rel \Eff(C) \cap \ZZ^{Q_0}$, and 
$$\dd=m_1\cdot \dd_1 \pp \ldots m_t \cdot \dd_t$$ the $\theta_0$-stable decomposition of $\dd$ in $C$ with $\dd_i\neq \dd_j, \forall 1 \leq  i \neq j \leq t$. Then,

$$
\mathcal F=\Eff(C) \cap \bigcap_{i=1}^t \mathbf H(\dd_i).
$$
\end{lemma}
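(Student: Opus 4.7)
The plan is to prove the stated equality by a double inclusion, reading the hypothesis as $\theta_0 \in \rel\mathcal{F} \cap \ZZ^{Q_0}$ so that the right-hand side depends on the face under consideration (otherwise the claim does not make sense).

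For the forward inclusion $\mathcal{F} \subseteq \Eff(C) \cap \bigcap_i \mathbf H(\dd_i)$, I would fix $\theta \in \mathcal{F}$ and exploit $\theta_0 \in \rel\mathcal{F}$ to produce some $\epsilon > 0$ for which $\theta_\pm := \theta_0 \pm \epsilon(\theta - \theta_0)$ both lie in $\mathcal{F} \subseteq \Eff(C)$. Since each of $C^{ss}_{\theta_0}, C^{ss}_{\theta_+}, C^{ss}_{\theta_-}$ is a nonempty open subset of the irreducible component $C$, their intersection is dense, so a generic $M \in C$ is simultaneously semistable with respect to all three weights. The $\theta_0$-Jordan--H\"older filtration of $M$ inside the abelian subcategory $\module(A)^{ss}_{\theta_0}$ has simple factors among the $\theta_0$-stable bricks $E_1, \ldots, E_t$ with $\ddim E_i = \dd_i$ and multiplicity $m_i$; in particular $\theta_0(\dd_i) = 0$. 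Using the freedom to reorder JH filtrations inside $\module(A)^{ss}_{\theta_0}$ together with the semistability inequalities $\theta_\pm(\ddim N) \leq 0$ for submodules $N \subseteq M$, I would deduce $\theta_\pm(\dd_i) \leq 0$ for every $i$. Combined with the identity $\theta_+ + \theta_- = 2\theta_0$ and the vanishing $\theta_0(\dd_i) = 0$, this forces $\theta_\pm(\dd_i) = 0$, and therefore $\theta(\dd_i) = 0$.

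For the reverse inclusion $\Eff(C) \cap \bigcap_i \mathbf H(\dd_i) \subseteq \mathcal{F}$, I would take $\theta$ in the right-hand side. The key observation is that $\theta(\dd_i) = \theta_0(\dd_i) = 0$ for each $i$, so each $\theta_0$-stable factor $E_i$ remains semistable with respect to every weight on the affine line $\theta_0 + s(\theta - \theta_0)$; the $\theta_0$-Jordan--H\"older filtration of the generic $M \in C$ then certifies semistability of $M$ along this line. Consequently, for some $\delta > 0$, both $\theta$ (at $s = 1$) and $\theta_0 - \delta(\theta - \theta_0)$ (at $s = -\delta$) lie in $\Eff(C)$, while $\theta_0 \in \rel\mathcal{F}$ is an interior point of the segment between them. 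The standard convex-analytic fact that a face of a convex set containing an interior point of a segment of the ambient set must contain the whole segment then places $\theta$ in $\mathcal{F}$.

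The main obstacle is the passage, in the forward inclusion, from partial-sum inequalities $\theta_\pm(\ddim M_j) \leq 0$ along a JH filtration to the per-factor equalities $\theta_\pm(\dd_i) = 0$. This requires realizing enough of the sub-dimension vectors $\dd_i$ as genuine subobjects of the generic $M$ inside $\module(A)^{ss}_{\theta_0}$; this should be tractable thanks to the distinctness of the $\dd_i$ and the abelian-category structure of $\module(A)^{ss}_{\theta_0}$ (where the $E_i$ are its simples), but it is the step that most deserves careful justification.
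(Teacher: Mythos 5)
Since this lemma is quoted from \cite[Lemma 6.5]{CC9} without proof in the present paper, I will assess your proposal on its own merits. Your reading of the hypothesis as $\theta_0 \in \rel\mathcal{F}$ is the right one (as the usage in the proof of Proposition \ref{exceptional-seq-tubular-prop} confirms), and the double-inclusion strategy, the genericity argument producing a simultaneously $\theta_0$-, $\theta_+$-, $\theta_-$-semistable $M$, and the appeal to the definition of a face in the reverse inclusion are all sound.

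The step you flag as the ``main obstacle''---getting $\theta_\pm(\dd_i)=0$ from partial-sum inequalities---is approached via a route that does not work: it is \emph{not} true that each $\theta_0$-stable JH factor $E_i$ can be realized as a subobject of the generic $M$ in $\module(A)^{ss}_{\theta_0}$ (already for the $A_2$-quiver, $S_1$ is a composition factor but not a subobject of the generic $(1,1)$-dimensional module). But you do not need this. Your averaging argument already produces the partial-sum \emph{equalities}: from $\theta_\pm(\ddim M_j) \leq 0$ and $\theta_+(\ddim M_j)+\theta_-(\ddim M_j)=2\theta_0(\ddim M_j)=0$, one gets $\theta_\pm(\ddim M_j)=0$ for all $j$, and then consecutive differences give $\theta_\pm(\dd_{\sigma(j)})=\theta_\pm(\ddim M_j)-\theta_\pm(\ddim M_{j-1})=0$. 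Since every index $i$ appears as some $\sigma(j)$, this finishes the forward inclusion directly, with no reordering of filtrations and no need to realize any $\dd_i$ as a subobject.

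In the reverse inclusion, the claim that each $E_i$ ``remains semistable with respect to every weight on the affine line $\theta_0+s(\theta-\theta_0)$'' is an overclaim: $\theta_s(\dd_i)=0$ handles only the equality part of semistability, not the inequalities over submodules of $E_i$. What you actually need---and what does hold---is that $E_i$ stays $\theta_s$-stable for $|s|$ sufficiently small, because $E_i$ is $\theta_0$-stable and there are only finitely many sub-dimension-vectors of $E_i$, so the finitely many strict inequalities persist under small perturbation. This yields the $\delta>0$ with $\theta_0-\delta(\theta-\theta_0)\in\Eff(C)$, after which your face argument closes the proof. With these two repairs the proposal becomes a correct proof of the lemma.
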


As a direct consequence of this lemma, we have the following useful result:

\begin{corollary}\label{facets-cor} Assume that $\Eff(C)$ has dimension $|Q_0|-1$ and let $\mathcal F$ be a facet of $\Eff(C)$. Let $\theta_0 \in \rel \Eff(C) \cap \ZZ^{Q_0}$ and let 
$$\dd=m_1\cdot \dd_1 \pp \ldots \pp m_t \cdot \dd_t$$ be the $\theta_0$-stable decomposition of $\dd$ in $C$ with $\dd_i\neq \dd_j, \forall 1 \leq  i \neq j \leq t$. 

If the dimension vectors $\dd_1, \ldots, \dd_t$ are indivisible then $\mathcal F=\Eff(\Lambda,\dd)\cap \mathbf{H}(\dd_1)\cap \mathbf{H}(\dd_2)$ and $\dd=n_1\dd_1+n_2\dd_2$ for unique numbers $n_1$ and $n_2$.
\end{corollary}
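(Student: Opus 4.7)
The plan is to derive the corollary from Lemma~\ref{face-Eff-lemma} together with a short linear-algebra dimension count, with the indivisibility hypothesis entering only to eliminate a single degenerate case.

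First, I would record two preliminary facts. Any $\theta\in\Eff(C)$ admits a $\theta$-semi-stable $M\in C$, so $\theta(\dd)=\theta(\ddim M)=0$; thus $\Eff(C)\subseteq \mathbf{H}(\dd)$, and since $\dim\Eff(C)=|Q_0|-1$ its linear span is exactly $\mathbf{H}(\dd)$. Also, $\dd=\sum_i m_i\dd_i$ puts $\dd$ in $\mathrm{span}_\RR(\dd_1,\dots,\dd_t)$, so $L:=\bigcap_{i=1}^{t}\mathbf{H}(\dd_i)\subseteq \mathbf{H}(\dd)$ and $\dim L=|Q_0|-r$, where $r:=\dim_\RR\mathrm{span}(\dd_1,\dots,\dd_t)$.

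Next, Lemma~\ref{face-Eff-lemma} gives $\mathcal F=\Eff(C)\cap L$, and since $\mathcal F$ is a facet with $\dim\mathcal F=|Q_0|-2$, the inclusion $\mathcal F\subseteq L$ forces $r\le 2$. In the case $r=1$, all $\dd_i$ would be positive rational multiples of a single vector; since each $\dd_i$ is a non-zero indivisible element of $\ZZ^{Q_0}_{\ge 0}$, they would necessarily coincide, giving $t=1$ and $\mathcal F=\Eff(C)\cap \mathbf{H}(\dd_1)=\Eff(C)$, contradicting $\mathcal F$ being a proper face. Hence $r=2$. After relabeling I may assume $\dd_1,\dd_2$ are linearly independent; the remaining $\dd_i$ then lie in $\mathrm{span}(\dd_1,\dd_2)$, so $L=\mathbf{H}(\dd_1)\cap \mathbf{H}(\dd_2)$ and the displayed formula follows. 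Finally, $\dd\in\mathrm{span}(\dd_1,\dd_2)$ yields the expression $\dd=n_1\dd_1+n_2\dd_2$, and uniqueness of $n_1,n_2$ is immediate from the linear independence of $\dd_1$ and $\dd_2$.

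The one delicate point is the elimination of $r=1$, which is precisely where the indivisibility hypothesis does its work; everything else is a clean bookkeeping argument about orthogonal complements of lattice vectors.
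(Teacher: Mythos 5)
Your argument is correct and follows the same route as the paper's proof: invoke Lemma~\ref{face-Eff-lemma}, do a dimension count to bound the rank of $\{\dd_1,\dots,\dd_t\}$, and use indivisibility to kill the degenerate rank-one case. One small simplification you missed: the ``after relabeling'' step is unnecessary, because $\dd_1\neq\dd_2$ are distinct indivisible vectors in $\ZZ^{Q_0}_{\geq 0}$ and are therefore automatically linearly independent (if one were a positive rational multiple of the other, indivisibility would force them to coincide), which is exactly the observation the paper uses to pin down $\dd_1,\dd_2$ specifically rather than some relabeled pair.
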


\begin{proof} Note that $\mathcal{F}$ has dimension $|Q_0|-2$, and so $t \geq 2$. Moreover, the dimension of the subspace of $\RR^{Q_0}$ orthogonal to the subspace spanned by $\{\dd, \dd_1,\dd_2\}$ is at least $|Q_0|-2$ since it contains $\mathcal F$. In particular, the set $\{\dd,\dd_1,\dd_2\}$ is linearly dependent. Since $\dd_1$ and $\dd_2$ are distinct indivisible vectors, we deduce that $\dd=n_1\dd_1+n_2\dd_2$ for unique numbers $n_1$ and $n_2$.

When $t=2$, the proof follows from Lemma \ref{face-Eff-lemma}. Now, let us assume that $t \geq 3$. Arguing as before, we deduce that $\dd$ is a linear combination of $\dd_i$ and $\dd_1$, and $\dd$ is also a linear combination of $\dd_i$ and $\dd_2$ for all $3 \leq i \leq t$. So, $\dd_i$ is a linear combination of  $\dd_1$ and $\dd_2$ for all $i$, and this implies that $\mathbf H(\dd_1) \cap \mathbf H(\dd_2)=\bigcap_{i=1}^t \mathbf H(\dd_i)$. The proof of the claim now follows again from Lemma \ref{face-Eff-lemma}.
\end{proof}

In the next section, we use this description of the facets of $\Eff(C)$ to prove the existence of short orthogonal exceptional sequences for tame concealed algebras. 

\section{Proof of Theorem \ref{main:thm}}\label{proof-of-theorem-sec}

We begin with the following example due to Zwara (see \cite{Zwa3}):

\begin{theorem}\label{Zwara-thm} Let $K_2$ be the Kronecker
quiver
$$
\xy     (0,0)*{1}="a";
        (10,0)*{2}="b";
        {\ar@2{->} "a";"b" };
\endxy
$$
Label the arrows by $a$ and $b$. Consider the following representation $M \in \rep(K_2,(3,3))$ defined by $M(a)=
\left(
\begin{matrix}
0& 0 & 0\\
1& 0 & 0\\
0& 1 & 0
\end{matrix}
\right)$ and $M(b)= \left(
\begin{matrix}
1& 0 & 0\\
0& 0 & 0\\
0& 0 & 1
\end{matrix}
\right)$. Then, $\overline{\GL((3,3))M}$ is neither unibranch nor Cohen-Macaulay.
\end{theorem}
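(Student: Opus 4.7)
My plan is to analyze the local structure of $\overline{\GL((3,3))M}$ at a carefully chosen degeneration $N$ in the boundary, and to deduce both failures from the local invariants there. The first step is to put the matrix pencil $sM(a)+tM(b)$ into Kronecker canonical form, which identifies $M$ as a direct sum of preprojective, preinjective and/or regular Kronecker indecomposables and yields $\dim \overline{\GL((3,3))M}=18-\dim \End_{K_2}(M)$ from the standard orbit-dimension formula.

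The second step is to pick a particularly degenerate module $N \in \overline{\GL((3,3))M}$ through which several distinct orbits can be approached. A natural first candidate is a partial semisimplification, where the ``pencil'' part of $M$ is split into many small blocks while some regular or Jordan structure is kept intact; another candidate is the fully semisimple module $S_1^{\oplus 3}\oplus S_2^{\oplus 3}$. The idea is that such an $N$ should be reachable from $\GL((3,3))M$ along several different minimal degeneration paths, each contributing a potentially distinct branch of $\overline{\GL((3,3))M}$ at $N$.

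To detect the failure of unibranchness, I would exhibit at least two minimal degenerations $M\geq M'$ and $M\geq M''$ whose stratum closures pass through $N$ with distinct tangent directions, using the Riedtmann--Zwara criterion (short exact sequences $0\to U\to M\oplus U\to M'\to 0$) to realize them. To detect the failure of Cohen-Macaulayness, I would compute both $\dim_N \overline{\GL((3,3))M}$ and the depth of its local ring at $N$, for instance via the length of a maximal regular sequence inside the defining ideal, and check that the depth is strictly smaller than the Krull dimension. A convenient bookkeeping device is to compare $\overline{\GL((3,3))M}$ with its normalization: non-unibranchness appears as multiple preimages of $N$, and failure of Cohen-Macaulayness is often detected by a jump in the codimension of the singular locus combined with an embedded component.

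The main obstacle is the explicit local analysis at $N$. Writing the defining ideal of $\overline{\GL((3,3))M}$ near $N$ in a form from which one can read off the number of minimal primes and the depth requires either a well-chosen set of rank conditions on $sM(a)+tM(b)$ or a computer-algebra verification. Finding an $N$ that is simultaneously simple enough for the local ring to be tractable and rich enough to exhibit both pathologies is the delicate step, and it is here that Zwara's detailed computation in \cite{Zwa3} is likely indispensable.
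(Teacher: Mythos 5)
The paper does not actually prove this statement; it quotes it directly from \cite{Zwa3}, where Zwara carries out the full local analysis. So there is no in-paper proof to compare against, only a citation, and your proposal has to be judged on its own terms.

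As a plan your proposal has the right general shape (decompose $M$ via the Kronecker canonical form, choose a degeneration $N$ in the boundary, study the local ring of $\overline{\GL((3,3))M}$ at $N$), but as written it is a research outline rather than a proof: no specific $N$ is chosen, no local ring is computed, and you acknowledge in the final sentence that Zwara's explicit computation is ``likely indispensable.'' Two technical points are also worth flagging. First, producing two minimal degenerations $M\geq M'$ and $M\geq M''$ ``with distinct tangent directions'' at $N$ does not by itself establish non-unibranchness. The orbit closure $\overline{\GL((3,3))M}$ is an irreducible variety, and unibranchness at $N$ is the assertion that the normalization has a single point over $N$, equivalently that the completed local ring $\widehat{\mathcal{O}}_{\overline{\GL((3,3))M},\,N}$ has a unique minimal prime; distinct degeneration paths through $N$ inside an irreducible variety do not automatically give distinct analytic branches. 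Second, for Cohen--Macaulayness the relevant quantity is the depth of the local ring $\mathcal{O}_{\overline{\GL((3,3))M},\,N}$ with respect to its maximal ideal, not ``the length of a maximal regular sequence inside the defining ideal''; the two are not the same thing. What Zwara actually does in \cite{Zwa3} is pass to a transversal slice through the $\GL((3,3))$-orbit of a specific boundary module $N$ and identify that slice, up to smooth equivalence, with an explicit low-dimensional affine variety whose failure of unibranchness and of Cohen--Macaulayness can be verified by direct commutative-algebra computation. Without that concrete slice your outline cannot be completed, so the proposal as it stands does not constitute a proof.
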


It essentially follows from the work of Happel and Vossieck in \cite{HapVos} that a basic, connected, representation-infinite algebra admitting a preprojective component has a tame concealed algebra as a factor (see also \cite[Section XIV.3]{Sim-Sko-2}). Consequently, to prove our theorem, we can reduce the considerations to the tame concealed case. Let us now briefly recall some of the key features of a tame concealed algebra $A=kQ/I$. It is well-known that there is a unique indivisible dimension vector $\hh$ of $A$ such that $q_A(\hh)=0$. In fact, $\hh$ turns out to be the unique isotropic Schur root of $A$. Let $\theta_{\hh} \in \ZZ^{Q_0}$ be the weight defined by $\theta_{\hh}(\dd)=\langle \hh, \dd \rangle_A, \forall \dd \in \ZZ^{Q_0}$. Now,  let $\mathcal P$ ($\mathcal R, \mathcal Q$, respectively) be the full subcategory of $\module(A)$ consisting of all $A$-modules that are direct sums of indecomposable $A$-modules $X$ such that $\theta_{\hh}(\ddim X)<0$ ($\theta_{\hh}(\ddim X)=0, \theta_{\hh}(\ddim X)>0$, respectively). The following properties hold true.
\begin{enumerate}
\renewcommand{\theenumi}{\roman{enumi}}
\item $\module(A)=\mathcal P \bigvee \mathcal R \bigvee \mathcal Q$, where the symbol $\bigvee$ indicates the formation of the additive closure of the union of the subcategories involved.
\item $\Hom_A(N,M)=\Ext^1_A(M,N)=0$ if either $N \in \mathcal R \bigvee \mathcal Q, M \in \mathcal P$ or  $N \in \mathcal Q, M \in \mathcal P \bigvee \mathcal R$.

\item $pd_A M\leq 1$ for all $M \in \mathcal P \bigvee \mathcal R$ and $id_A N \leq 1$ for all $N \in \mathcal R \bigvee \mathcal Q$.
\end{enumerate}

The next two results have been proved for certain tame concealed algebras in \cite[Section 6.2]{CC9}, and the arguments in loc. cit. work for arbitrary tame concealed algebras. Nonetheless, for completeness and for the convenience of the reader, we provide the proofs below. 

\begin{lemma} \label{lemma-dim-Eff} If $A$ is a tame concealed algebra then $\module(A,\hh)^s_{\theta_{\hh}} \neq \emptyset$.
\end{lemma}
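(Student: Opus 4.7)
The plan is to exhibit an explicit $\theta_{\hh}$-stable module of dimension vector $\hh$, namely a quasi-simple module $E$ sitting at the mouth of a homogeneous tube in the regular part $\mathcal{R}$. For a tame concealed algebra, the regular components are known to form a $\mathbb{P}^1$-family of standard stable tubes, all but finitely many of which are homogeneous (rank one), and the mouth module of such a homogeneous tube is an indecomposable $E$ with $\ddim E = \hh$, $\End_A(E) \cong k$, and — crucially — no proper nonzero submodule lying in $\mathcal{R}$. I would quote this tube structure as classical background and take such an $E$ as the starting point.

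The first preliminary step is to check that $\theta_{\hh}(\hh) = 0$. By definition $\theta_{\hh}(\hh) = \langle \hh, \hh \rangle_A = \chi_A(\hh)$, and since tame concealed algebras have global dimension at most two, $\chi_A$ agrees with the Tits form $q_A$; the isotropy assumption $q_A(\hh) = 0$ then supplies what is needed.

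The heart of the argument is to show that every proper nonzero submodule $M' \subsetneq E$ satisfies $\theta_{\hh}(\ddim M') < 0$. I would decompose $M' = M'_{\mathcal{P}} \oplus M'_{\mathcal{R}} \oplus M'_{\mathcal{Q}}$ according to the trichotomy (i). Property (ii) gives $\Hom_A(M'_{\mathcal{Q}}, E) = 0$ since $E \in \mathcal{R}$, so $M'_{\mathcal{Q}} = 0$. Because $E$ is quasi-simple, any submodule of $E$ lying in $\mathcal{R}$ must be $0$ or $E$; since $M'$ is proper, this forces $M'_{\mathcal{R}} = 0$ as well. Hence $M'$ lies entirely in $\mathcal{P}$, and since $M' \neq 0$, at least one indecomposable summand $X$ of $M'$ contributes $\theta_{\hh}(\ddim X) < 0$ by definition of $\mathcal{P}$; summing gives the strict inequality $\theta_{\hh}(\ddim M') = \langle \hh, \ddim M' \rangle_A < 0$.

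The step that requires some care is the input from the tube theory — specifically, the existence of a homogeneous tube whose mouth module has dimension vector exactly the isotropic Schur root $\hh$ and is a brick. This is classical (Ringel, Happel--Vossieck), and once it is in hand the remaining verification of $\theta_{\hh}$-stability is formal and uses only properties (i)--(iii) of the decomposition $\module(A) = \mathcal{P} \bigvee \mathcal{R} \bigvee \mathcal{Q}$. I do not anticipate any serious obstacle beyond invoking this standard structure theorem.
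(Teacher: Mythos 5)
Your proposal is correct and takes essentially the same approach as the paper: both pick a quasi-simple module of dimension vector $\hh$ at the mouth of a homogeneous tube and deduce $\theta_{\hh}$-stability from the $\mathcal P \bigvee \mathcal R \bigvee \mathcal Q$ trichotomy and orthogonality of tubes. The paper argues by contradiction via a Jordan--H\"older filtration in $\module(A)^{ss}_{\theta_{\hh}}$, concluding that a would-be destabilizing stable submodule lies in the same homogeneous tube and thus has dimension vector a multiple of $\hh$, whereas you verify the strict inequality directly on an arbitrary proper nonzero submodule by eliminating its $\mathcal Q$ and $\mathcal R$ parts; this is a minor stylistic variation, not a genuinely different route.
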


\begin{proof} First of all, it is clear that $\module(A,\hh)^{ss}_{\theta_{\hh}}\neq \emptyset$ since any $\hh$-dimensional $A$-module from $\mathcal R$ is $\theta_{\hh}$-semi-stable. Let $M\in \module(A,\hh)$ be an indecomposable module that lies in a homogeneous tube of $\mathcal R$. We are going to show that $M$ is $\theta_{\hh}$-stable. Assume to the contrary that $M$ is not $\theta_{\hh}$-stable and consider a Jordan-H{\"o}lder filtration of $M$ in $\module(A)^{ss}_{\theta_{\hh}}$. This way, we can see that $M$ must have a proper $\theta_{\hh}$-stable submodule $M'$. Then, $M'$ must belong to the homogeneous tube of $M$, and from this we deduce that $\ddim M'$ is an integer multiple of $\hh$. But this is a contradiction.
\end{proof}

\begin{proposition} \label{exceptional-seq-tubular-prop} If $A$ is a tame concealed algebra then there exists an orthogonal exceptional sequence $\mathcal{E}=(E_1,E_2)$ of $A$-modules such that $A_{\mathcal E}$ is the path algebra of the Kronecker quiver $K_2$.
\end{proposition}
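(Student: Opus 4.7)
The plan is to read the pair $(E_1,E_2)$ off a codimension-one face of the cone $\Eff(C)$ of effective weights at the isotropic root $\hh$. First I dispose of the base case $Q_A=K_2$: the two simple $A$-modules, ordered so that the projective one comes first, form an orthogonal exceptional sequence with $A_{\mathcal{E}}=kK_2$. So assume $|Q_0|\geq 3$ from now on.

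Let $C$ be an irreducible component of $\module(A,\hh)$ containing a $\theta_{\hh}$-stable module (which exists by Lemma~\ref{lemma-dim-Eff}). Since stability is open, $\theta_{\hh}$ lies in the relative interior of $\Eff(C)$, forcing $\dim\Eff(C)=|Q_0|-1$; as the cone is a proper subcone of $\mathbf{H}(\hh)$, it admits facets of positive dimension. Pick such a facet $\mathcal{F}$ together with a lattice point $\theta_0\in\rel\mathcal{F}\cap\ZZ^{Q_0}$, and consider the $\theta_0$-stable decomposition $\hh=m_1\cdot\dd_1\pp\ldots\pp m_t\cdot\dd_t$ in $C$. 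Since $\mathcal{F}$ is a proper face, $t\geq 2$, so each $\dd_i\neq\hh$; as $\hh$ is the unique isotropic Schur root of the tame concealed algebra $A$, every $\dd_i$ must be a real Schur root, and hence indivisible. Corollary~\ref{facets-cor} then produces positive rationals $n_1,n_2$ with $\hh=n_1\dd_1+n_2\dd_2$. Expanding $q_A(\hh)=0$ using $q_A(\dd_i)=1$ gives
$$0=n_1^2+n_2^2+n_1n_2\bigl(\langle\dd_1,\dd_2\rangle_A+\langle\dd_2,\dd_1\rangle_A\bigr),$$
and a short arithmetic argument together with indivisibility of $\hh$ forces $n_1=n_2=1$, so $\hh=\dd_1+\dd_2$ and $\langle\dd_1,\dd_2\rangle_A+\langle\dd_2,\dd_1\rangle_A=-2$.

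Next, let $E_1',E_2'$ be generic $\theta_0$-stable modules of dimension vectors $\dd_1,\dd_2$. Each $E_i'$ is Schur, and real Schurness implies exceptional; as distinct simple objects of $\module(A)^{ss}_{\theta_0}$, $\Hom_A(E_1',E_2')=\Hom_A(E_2',E_1')=0$. Since $\theta_{\hh}(\dd_1)+\theta_{\hh}(\dd_2)=0$, I relabel so that $\theta_{\hh}(\dd_1)\leq 0\leq\theta_{\hh}(\dd_2)$; then $E_1:=E_1'\in\mathcal{P}\bigvee\mathcal{R}$ and $E_2:=E_2'\in\mathcal{R}\bigvee\mathcal{Q}$. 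Property (ii) of the trichotomy yields $\Ext^1_A(E_1,E_2)=0$, and property (iii) gives $pd_A E_1\leq 1$, killing all higher Ext's; combined with $\Hom_A(E_1,E_2)=0=\Hom_A(E_2,E_1)$, this verifies the orthogonal exceptional sequence axioms for $(E_1,E_2)$. The Ringel-form identity then forces $\langle\dd_2,\dd_1\rangle_A=-2$, and once the higher $\Ext^l_A(E_2,E_1)$ for $l\geq 2$ are shown to vanish, one reads off $\dim_k\Ext^1_A(E_2,E_1)=2$, so $Q_{\mathcal{E}}$ is the Kronecker quiver $K_2$; since this quiver admits no composable arrows, $A_{\mathcal{E}}\simeq kK_2$.

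The most delicate step is verifying $\Ext^l_A(E_2,E_1)=0$ for $l\geq 2$ in the mixed regime where $E_1$ is strictly preprojective and $E_2$ is strictly preinjective, since neither $pd_A E_2\leq 1$ nor $id_A E_1\leq 1$ is supplied directly by property (iii). I would handle this by invoking the derived equivalence between $\module(A)$ and the module category of the underlying tame hereditary algebra $H$: under this equivalence the preprojective, regular, and preinjective $A$-modules correspond to the analogous $H$-modules, and since $H$ is hereditary, $\Ext^{\geq 2}_H$ vanishes automatically, which transfers back to the vanishing on the $A$-side.
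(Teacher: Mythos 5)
Your proposal tracks the paper's argument very closely up to and including the identities $\hh=\dd_1+\dd_2$ and $\langle\dd_1,\dd_2\rangle_A+\langle\dd_2,\dd_1\rangle_A=-2$ (the paper also uses that $\hh$ is in the radical of $\chi_A$ to get $2n_1=n_2l$, $2n_2=n_1l$, but your version via $q_A(\hh)=0$ alone plus indivisibility is fine). The divergence — and the gap — is in how $E_1,E_2$ are chosen and how $\Ext^{\geq 2}_A(E_2,E_1)$ is killed. The paper takes $E_1,E_2$ to be the two Jordan--H\"older factors of a \emph{single} generic, $\theta_{\hh}$-stable module $M\in\mathcal R$, giving a short exact sequence $0\to E_1\to M\to E_2\to 0$; $\theta_{\hh}$-stability of $M$ then forces the \emph{strict} inequalities $\theta_{\hh}(\dd_1)<0<\theta_{\hh}(\dd_2)$, and applying $\Hom_A(E_2,-)$ to the sequence together with $id_A M\leq 1$ (property (iii), since $M$ is regular) immediately gives $\Ext^2_A(E_2,E_1)=0$, with the higher $\Ext$'s vanishing because $A$ is tilted of global dimension $\leq 2$. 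You instead take $E_1,E_2$ to be unrelated generic stable modules of the right dimension vectors, which throws away the connecting module $M$ and the exact sequence, and you then reach for a ``derived equivalence'' between $\module(A)$ and $\module(H)$.

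That last step is where the argument does not hold up as written. There is no equivalence of abelian categories between $\module(A)$ and $\module(H)$, only a derived equivalence induced by a preprojective tilting $H$-module $T$. Under $R\Hom_H(T,-)$, regular and preinjective $H$-modules do land in $\module(A)$ (they lie in the tilting torsion class), but preprojective $H$-modules split: some map to $A$-modules, others to $A$-modules shifted by $[-1]$. Dually, not every preprojective $A$-module is of the form $\Hom_H(T,X)$; some are $\Ext^1_H(T,Y)$ with $Y$ in the torsion-free class, and for those the degree bookkeeping for $\Ext^l$ shifts by one. So the sentence ``the preprojective, regular, and preinjective $A$-modules correspond to the analogous $H$-modules'' is false in general, and the claimed automatic transfer of $\Ext^{\geq 2}$-vanishing does not go through without first proving that your chosen $E_1$ lies in the tilted torsion class $\mathcal Y$. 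That can be salvaged with extra work, but it is precisely the kind of case analysis the paper's choice of $E_1,E_2$ as factors of a regular module $M$ is designed to avoid. A secondary, smaller issue: you only arrange $\theta_{\hh}(\dd_1)\leq 0\leq\theta_{\hh}(\dd_2)$, which leaves open the degenerate case $\theta_{\hh}(\dd_1)=0=\theta_{\hh}(\dd_2)$ (both regular), where property~(ii) does not apply to give $\Ext^1_A(E_1,E_2)=0$. Strictness is again free once you realize $\dd_1$ is the dimension vector of a proper submodule of a $\theta_{\hh}$-stable $M$. I would recommend reinstating the module $M$ and the short exact sequence: it simultaneously delivers the strict trichotomy placement of $E_1,E_2$ and the vanishing of $\Ext^2_A(E_2,E_1)$ in two lines, replacing the fragile tilting-theoretic detour.
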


\begin{proof} Let $\hh$ be the unique isotropic Schur root of $A$. The module variety $\module(A, \hh)$ is irreducible by Corollary 3 in \cite{BS1}, and let us denote its cone of effective weights by $\Eff(A, \hh)$. We know from Lemma \ref{lemma-dim-Eff} that there exists a module $M \in \module(A,\hh)$ which is $\theta_{\hh}$-stable. In other words, the subset $\Omega^0(M)$ of $\Eff(A, \hh)$, defined as $\Omega^0(M)=\{\theta \in \RR^{Q_0} \mid \theta(\hh)=0, \theta(\ddim M')<0, \forall \{0\}\subset M' \subset M\}$, is a non-empty open (with respect to the Euclidean topology) subset of $\mathbf{H}(\hh)$. We deduce from this that $\dim \Eff(A,\hh)=|Q_0|-1$. Next, choose a facet $\mathcal{F}$ of the cone $\Eff(A,\hh)$ and a weight $\theta_0\in \rel \mathcal F \cap \ZZ^{\Delta_0}$. Now, consider the $\theta_0$-stable decomposition of $\hh$ in $\module(A, \hh)$:
$$\hh=m_1\cdot \hh_1 \pp \ldots \pp m_t \cdot \hh_t,$$ with $m_1,\ldots, m_t$ positive integers and $\hh_i\neq \hh_j, \forall 1 \leq i \neq j \leq t$. Note that $\hh_1, \ldots, \hh_t$ are indivisible real Schur roots.

For each $1 \leq i \leq t$, let $E_i$ be a $\hh_i$-dimensional $\theta_0$-stable module that arises as a factor of a Jordan-H{\"o}lder filtration of a generic module $M$ in $\module(A, \hh)$. Note that we can choose $M$ to be $\theta_{\hh}$-stable by Lemma \ref{lemma-dim-Eff}. Furthermore, we have that $\Hom_A(E_i,E_j)=0, \forall 1 \leq i\neq j\leq t$, since $E_1, \ldots, E_t$ are pairwise non-isomorphic ($\theta_0$-)stable modules. 

Since  $\hh_1, \ldots, \hh_t$ are indivisible, we have $\mathcal F=\Eff(\Lambda,\hh)\cap \mathbf{H}(\hh_1)\cap \mathbf{H}(\hh_2)$ and $\hh=n_1\hh_1+n_2\hh_2$ for unique numbers $n_1$ and $n_2$ by Corollary \ref{facets-cor}.

We have that $q_A(\hh_1)=q_A(\hh_2)=1$, and $E_1$ and $E_2$ are exceptional $A$-modules. To simplify notation, set $l=-\langle \hh_1,\hh_2\rangle_A-\langle \hh_2,\hh_1\rangle_A$. Then, using the fact that $\hh$ is an isotropic root in the radical of $\chi_{A}$, we deduce that $2n_1=n_2l, 2n_2=n_1l$, and $n_1^2+n_2^2=ln_1n_2$ . From these relations and the fact that $\hh$ is indivisible, we deduce that $n_1=n_2=1$ and $l=2$. Without loss of generality, we can assume that $E_1$ is a submodule of $M$ and $E_2=M/E_1$. Then, we have that $\dim_k \Ext^1_{\Lambda}(E_2,E_1)>0$. 

In what follows, we show that $\mathcal{E}:=(E_1,E_2)$ is an orthogonal exceptional sequence with $\dim_k \Ext^1_{\Lambda}(E_2,E_1)=2$ and $\Ext^2_{\Lambda}(E_2,E_1)=0$. 

As $M$ is $\theta_{\hh}$-stable we have that $\theta_{\hh}(\hh_1)<0$ and $\theta_{\hh}(\hh_2)>0$. Using the properties (ii)-(iii) mentioned above, we conclude  that $\mathcal{E}$ is an orthogonal exceptional sequence; in particular, we have that $\langle \hh_1,\hh_2 \rangle_A =0$, and so $\dim_k \Ext^1_A(E_2,E_1)-\dim_k \Ext^2_A(E_2,E_1)=-\langle \hh_2,\hh_1 \rangle_A=2$. Finally, consider exact sequence $0\to E_1\to M\to E_2\to 0$ and the induced exact sequence:
$$
0=\Ext^1_A(E_2,E_2)\to \Ext^2_A(E_2,E_1)\to \Ext^2_A(E_2,M)=0.$$
It is now clear that $\mathcal{E}$ has indeed the desired properties.
\end{proof}

Now, we are ready to prove out theorem:

\begin{proof}[Proof of Theorem \ref{main:thm}] It follows immediately from Proposition \ref{exceptional-seq-tubular-prop}, Proposition \ref{exceptional-Zwara-prop}, and Theorem \ref{Zwara-thm}.
\end{proof}

\subsection*{Acknowledgment} I am very grateful to the referee for a careful reading of the paper. The author was partially supported by NSF grant DMS-1101383.

\end{document}